\newtheorem{definition}{Definition}[section]
\newtheorem{proposition}[definition]{Proposition}
\newtheorem{theorem}[definition]{Theorem}
\newtheorem{conjecture}[definition]{Conjecture}
\def\squareforqed{\hbox{\rlap{$\sqcap$}$\sqcup$}}
\def\qed{\ifmmode\squareforqed\else{\unskip\nobreak\hfil
\penalty50\hskip1em\null\nobreak\hfil\squareforqed
\parfillskip=0pt\finalhyphendemerits=0\endgraf}\fi}
\def\endenv{\ifmmode\;\else{\unskip\nobreak\hfil
\penalty50\hskip1em\null\nobreak\hfil\;
\parfillskip=0pt\finalhyphendemerits=0\endgraf}\fi}
\newenvironment{proof}{\noindent \textbf{{Proof~} }}{\qed}
\newenvironment{example}{\noindent \textbf{{Example~}}}{\qed}
\mathchardef\ordinarycolon\mathcode`\:
\def\vcentcolon{\mathrel{\mathop\ordinarycolon}}
\newcommand{\nc}{\newcommand}
\nc{\rnc}{\renewcommand} \nc{\beq}{\begin{equation}}
\nc{\eeq}{{\end{equation}}} \nc{\bea}{\begin{eqnarray}}
\nc{\eea}{\end{eqnarray}} \nc{\beqa}{\begin{eqnarray}}
\nc{\eeqa}{\end{eqnarray}} \nc{\lbar}[1]{\overline{#1}}
\nc{\bra}[1]{\langle#1|} \nc{\ket}[1]{|#1\rangle}
\nc{\ketbra}[2]{|#1\rangle\!\langle#2|}
\nc{\braket}[2]{\langle#1|#2\rangle} \nc{\proj}[1]{|
#1\rangle\!\langle #1 |} \nc{\avg}[1]{\langle#1\rangle}
\rnc{\max}{\operatorname{max}} \nc{\rank}{\operatorname{rank}}
\nc{\conv}{\operatorname{conv}}
\nc{\smfrac}[2]{\mbox{$\frac{#1}{#2}$}}
\nc{\Tr}{\operatorname{Tr}}
\nc{\ox}{\otimes}
\nc{\dg}{\dagger}
\nc{\dn}{\downarrow} \nc{\cA}{{\cal A}} \nc{\cB}{{\cal B}}
\nc{\cC}{{\cal C}} \nc{\cD}{{\cal D}} \nc{\cE}{{\cal E}}
\nc{\cF}{{\cal F}} \nc{\cG}{{\cal G}} \nc{\cH}{{\cal H}}
\nc{\cI}{{\cal I}} \nc{\cJ}{{\cal J}} \nc{\cK}{{\cal K}}
\nc{\cL}{{\cal L}} \nc{\cM}{{\cal M}} \nc{\cN}{{\cal N}}
\nc{\cO}{{\cal O}} \nc{\cP}{{\cal P}} \nc{\cR}{{\cal R}}
\nc{\cS}{{\cal S}} \nc{\cT}{{\cal T}} \nc{\cU}{{\cal U}}
\nc{\cX}{{\cal X}} \nc{\cW}{{\cal W}} \nc{\cZ}{{\cal Z}}
\nc{\csupp}{{\operatorname{csupp}}}
\nc{\qsupp}{{\operatorname{qsupp}}} \nc{\var}{\operatorname{var}}
\nc{\rar}{\rightarrow} \nc{\lrar}{\longrightarrow}
\nc{\poly}{\operatorname{poly}}
\nc{\polylog}{\operatorname{polylog}}
\nc{\Lip}{\operatorname{Lip}} \nc{\1}{\openone}
\def\>{\rangle}
\def\<{\langle}
\def\e{\epsilon}
\nc{\glneq}{{\raisebox{0.6ex}{$>$}  \hspace*{-1.8ex} \raisebox{-0.6ex}{$<$}}}
\nc{\gleq}{{\raisebox{0.6ex}{$\geq$}\hspace*{-1.8ex} \raisebox{-0.6ex}{$\leq$}}}
\nc{\RR}{{{\mathbb R}}}
\nc{\CC}{{{\mathbb C}}}
\nc{\FF}{{{\mathbb F}}}
\nc{\HH}{{{\mathbb H}}}
\nc{\NN}{{{\mathbb N}}}
\nc{\ZZ}{{{\mathbb Z}}}
\nc{\PP}{{{\mathbb P}}}
\nc{\QQ}{{{\mathbb Q}}}
\nc{\UU}{{{\mathbb U}}}
\nc{\WW}{{{\mathbb W}}}
\nc{\EE}{{{\mathbb E}}}
\rnc{\SS}{{{\mathbb S}}}
\nc{\id}{{\operatorname{id}}}
\nc{\vholder}[1]{\rule{0pt}{#1}}
\nc{\ob}[1]{#1}
\def\beq{\begin {equation}}
\def\eeq{\end {equation}}
\nc{\eq}[1]{Eq.~(\ref{eq:#1})} \nc{\eqs}[2]{Eqs.~(\ref{eq:#1}) and
(\ref{eq:#2})}
\nc{\eqn}[1]{Eq.~(\ref{eqn:#1})}
\nc{\eqns}[2]{Eqs.~(\ref{eqn:#1}) and (\ref{eqn:#2})}
\nc{\region}{\cS\cW}
\begin{document}

\title{Intersection patterns of linear subspaces \protect\\ with the hypercube}


\author[nm]{Nolmar Melo}
\ead{nolmar.melo@ufvjm.edu.br}
\address[nm]{Universidade Federal dos Vales do Jequitinhonha e Mucuri,
              Departamento de Ci\^{e}ncias Exatas, \\
              Jardim S\~{a}o Paulo,
              39803-371 Te\'{o}filo Otoni, MG, Brasil}
\author[aw1,aw2]{Andreas Winter}
\ead{andreas.winter@uab.cat}
\address[aw1]{ICREA---Instituci\'o Catalana de la Recerca i Estudis Avan\c{c}ats, \\
             Pg. Llu\'is Companys, 23, ES-08001 Barcelona, Spain}
\address[aw2]{Departament de F\'isica: Grup d'Informaci\'o Qu\`antica, \\
             Universitat Aut\`onoma de Barcelona, ES-08193 Bellaterra (Barcelona), Spain}     

\renewcommand*{\today}{6 December 2017, revised 7 November 2018}        
\date{6 December 2017, revised 7 November 2018}

\begin{abstract}
Following a combinatorial observation made by one of us recently in
relation to a problem in quantum information 
[Nakata \emph{et al.}, \emph{Phys. Rev. X} \textbf{7}:021006 (2017)],
we study what are the possible intersection cardinalities
of a $k$-dimensional subspace with the hypercube in $n$-dimensional
Euclidean space. We also propose two natural variants of the 
problem by restricting the type of subspace allowed.

We find that whereas every natural number eventually occurs as the 
intersection cardinality for \emph{some} $k$ and $n$, on the other
hand for each fixed $k$, the possible intersections sizes are governed by 
severe restrictions.
To wit, while the largest intersection size is evidently $2^k$, there
is always a large gap to the second largest intersection size, which
we find to be $\frac34 2^k$ for $k \geq 2$ (and $2^{k-1}$ in the
restricted version). 
We also present several constructions, and propose a number of
open questions and conjectures for future investigation.
\end{abstract}

\maketitle

\parskip .75ex


\section{Introduction} 
\label{sec:intro}
The interplay between algebra and combinatorics has proved fruitful
in may different contexts and in different ways 
\cite{BabaiFrankl,BannaiIto,new-persp}. 
The particular flavour of problems we are looking at here is obtained
by importing combinatorial structures as $0$-$1$-vectors into a real
vector space and looking at those satisfying linear constraints; using 
these can result in elegant proofs of combinatorial theorems \cite{Winter:Bondy}.
Ahlswede \emph{et al.} were among the first to systematically
consider extremal problems under dimension constraints 
\cite{AAK-0,AAK-1,AAK-5}. Our work is more in line
with a result by Alon and F\"uredi on covering the hypercube by 
hyperplanes \cite{AlonFueredi}.

In the present paper, we want to further the deep connection between these 
two fields by considering one of the most basic questions one can ask about 
the hypercube $\HH^n := \{0,1\}^n \subset \RR^n$, namely what
can be its intersection with a $k$-dimensional affine linear
subspace $S$ of $\RR^n$? 

By way of notation, we write $e_j$ for the $j$-th standard basis vector of, 
$j=1,\ldots,n$. For a generic vector $x\in\RR^n$, denote $(x)_k$ its $k$-th
component, so that $x = \sum_{k=1}^n (x)_k e_k$.

Obviously, the largest cardinality of
an intersection is $2^k$, but are all other numbers smaller than that
possible? Here, we want to study the sets $H(n,k)$ of possible non-zero
intersection cardinalities of a $k$-dimensional $S$ with $\HH^n$:
\begin{equation}
  H(n,k) := \bigl\{ t > 0: \exists S < \RR^n,\ \dim S = k,\ t = |\HH^n\cap S| \bigr\}.
\end{equation}
Except for the case of empty intersection, any affine subspace can be
translated and reflected, using the symmetries of the hypercube, to a linear subspace
(i.e.~containing the origin) that has the same number of points in
common with $\HH^n$ as the original subspace. We shall henceforth assume,
whenever it is convenient, that $S$ is a linear subspace.
In that case, furthermore, we may assume, by possibly permuting the 
coordinates of $\RR^n = \RR^k \oplus \RR^{n-k}$, 
that $S$ projected onto the first $k$ coordinates spans $\RR^k$.
(Here, we identify $\RR^k = \operatorname{span}\{e_1,\ldots,\e_k\}$ with the subspace of 
$\RR^n$ of vectors that have the last $n-k$ coordinates equal to $0$; likewise,
$\RR^{n-k} = \operatorname{span}\{e_{k+1},\ldots,\e_n\}$ is identified with the subspace of 
$\RR^n$ of vectors that have the first $k$ coordinates equal to $0$.)
Thus, $S$ can be parametrised as 
\[
  S = \bigl\{ v \oplus Lv : v \in \RR^k \bigr\},
  \text{ with a linear map } L:\RR^k\longrightarrow \RR^m,
\]
where $m=n-k$. We can write explicitly, $L=\pi_m \circ \pi_k^{-1}$, where
$\pi_k:S\longrightarrow\RR^k$ and $\pi_m:S\longrightarrow\RR^m$ are the
projection of $S$ onto the first $k$ and last $m$ coordinates, respectively.
This gives rise to an equivalent characterisation of $H(n,k)$ as
\begin{equation}
	H(n,k) = \bigl\{ t : \exists L:\RR^k\rightarrow \RR^m,\ t=|\HH^k \cap L^{-1}\HH^m| \bigr\}.
\end{equation}
This characterisation has the advantage that additional properties of the
linear map $L$ can be imposed. For instance, in \cite{t-design} the 
case of an isometry $L$ (and hence implicitly $m \geq k$) was studied,
motivating the definitions
\begin{equation}
  \widehat{H}(n,k) = \bigl\{ t : \exists L:\RR^k\rightarrow \RR^m \text{ isometry},\ 
                                        t=|\HH^k \cap L^{-1}\HH^m|=|\HH^m \cap L\HH^k| \bigr\},
\end{equation}
and the potentially more flexible
\begin{equation}
  \widetilde{H}(n,k) = \bigl\{ t : \exists L:\RR^k\rightarrow \RR^m \text{ contraction},\ 
  t=|\HH^k \cap L^{-1}\HH^m| \bigr\}.
\end{equation}

Clearly, when $k=n$, $H(n,k)=\widetilde{H}(n,k)=\{2^k\}$, so we may assume 
from now on that $n > k$, in which case both $H(n,k)$ and $\widetilde{H}(n,k)$
contain all powers of $2$ from $1$ to $2^k$. In \cite{t-design} it was shown
that the second largest element of $\widehat{H}(2k,k)$ is $2^{k-1}$, a result
we shall reproduce and generalise below.
Apart from the cases of fixed $n$ and $k$, we are very much interested in the
case of unbounded $n$ for given $k$, giving rise to the variants
\[
  H(\infty,k)             := \bigcup_{n\geq k} H(n,k),\quad
  \widehat{H}(\infty,k)   := \bigcup_{n\geq 2k} \widehat{H}(n,k),\quad
  \widetilde{H}(\infty,k) := \bigcup_{n\geq k} \widetilde{H}(n,k).   
\]

The structure of the rest of the paper is as follows: 
In section \ref{sec:constructions}, we review some constructions showing 
certain numbers to be realisable as intersections.
In section \ref{sec:Hnk} we derive bounds on the second largest
number in $H(n,k)$, and in section \ref{sec:tilde-Hnk} we find
the second largest number in the isometry and contraction versions
$\widehat{H}(n,k)$ and $\widetilde{H}(n,k)$. After that we conclude
highlighting several open questions and conjectures.

\section{Some constructions and other observations} 
\label{sec:constructions}

Evidently, the powers of \(2\) are always possible intersections,
\[
  \{1,2,4,\ldots,2^k\} \subset \widehat{H}(\infty,k) 
                       \subset \widetilde{H}(\infty,k)
                       \subset H(\infty,k).
\]
A simple construction for $2^j \in \widehat{H}(2k,k)$ is in terms 
of the following isometry $L:\RR^k \longrightarrow \RR^k$:
\[
  L e_i := \begin{cases}
              e_i & \text{ for } i \leq j, \\
             -e_i & \text{ for } j < i \leq k.
           \end{cases}
\]
To see $2^j \in \widetilde{H}(k+1,k)$, consider the contraction
$L:\RR^k \longrightarrow \RR$ defined by
\[
  L e_i := \begin{cases}
                            0 & \text{ for } i \leq j, \\
             -\varepsilon e_i & \text{ for } j < i \leq k,
           \end{cases}
\]
with $0 < \varepsilon < \frac 1k$ any sufficiently small positive number.

However, powers of $2$ are not the whole story, by a long way.
To start, $3 \in H(3,2)$ as can be seen from inspecting the
linear map $L:\RR^2 \longrightarrow \RR$ defined by
$L e_1 = L e_2 = 1$. The following proposition generalises this
observation.

\begin{proposition}
  \label{prop:k+1:2k-1}
  For all $k\geq 1$: $k+1 \in H(k+1,k)$, $2k-1 \in H(k+2,k)$
  and $2^{k-1}+1 \in H(2k-1,k)$.
\end{proposition}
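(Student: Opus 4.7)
The plan is to exhibit, in each of the three claims, an explicit linear map $L:\RR^k\to\RR^m$ with $m=n-k$, and to count the elements of $\HH^k\cap L^{-1}\HH^m$, invoking the equivalent characterisation of $H(n,k)$ set up earlier. The common design principle is to couple the coordinate $v_1$ to each output of $L$, so that on the half $\{v_1=0\}$ the map $L$ reduces to something with many preimages in $\HH^m$, while on $\{v_1=1\}$ each output becomes $1+(\text{tail})$, forcing the ``tail'' to vanish and contributing exactly one vector (namely $e_1$). The entire count then follows from a short case split on the value of $v_1\in\{0,1\}$.

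For the first claim, $k+1\in H(k+1,k)$, I take $m=1$ and $L(v)=v_1+v_2+\cdots+v_k$. A vector $v\in\HH^k$ lies in the intersection iff $Lv\in\{0,1\}$, i.e.\ iff $v$ has at most one entry equal to $1$; this yields the $k+1$ vectors $0,e_1,\ldots,e_k$.

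For the third claim, $2^{k-1}+1\in H(2k-1,k)$, I take $m=k-1$ and $L(v)=(v_1+v_2,\,v_1+v_3,\,\ldots,\,v_1+v_k)$. If $v_1=0$ then $Lv=(v_2,\ldots,v_k)\in\HH^{k-1}$ unconditionally, contributing all $2^{k-1}$ vectors of that form; if $v_1=1$ then each entry of $Lv$ equals $1+v_i$, which lies in $\{0,1\}$ only when $v_i=0$, so only $v=e_1$ survives and we add a single point, for a total of $2^{k-1}+1$.

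For the middle claim, $2k-1\in H(k+2,k)$, I focus on $k\ge 2$ (the case $k=1$ being trivial, since $1\in H(3,1)$ via any generic line through the origin). This is the subtler construction because the target is an odd number not of the form $2^j-1$, so a pure ``sum of coordinates'' template will not suffice. My proposal is $L(v)=(v_1+v_2,\,v_1+v_3+v_4+\cdots+v_k)$, pairing $v_1$ with a single variable $v_2$ in one output and with the entire complementary block in the other. On $\{v_1=0\}$ the first output is freely $v_2\in\{0,1\}$, while the second demands that at most one of $v_3,\ldots,v_k$ equal $1$, contributing $2(k-1)$ vectors; on $\{v_1=1\}$ the first output forces $v_2=0$ and the second forces $v_3=\cdots=v_k=0$, yielding the single vector $e_1$. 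The totals combine to $2k-1$. I expect the main obstacle to be precisely the non-obviousness of this unequal splitting of the tail variables between the two outputs; once the splitting is in hand, everything reduces to elementary bookkeeping.
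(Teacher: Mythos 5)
Your proposal is correct, and for the first and third claims your maps coincide with the paper's up to relabelling of coordinates (the paper sends every $e_i$ to $1$ for the first, and sends $e_i\mapsto e_i$ for $i<k$, $e_k\mapsto e_1+\cdots+e_{k-1}$ for the third). The only place you diverge is the middle claim $2k-1\in H(k+2,k)$: the paper takes $Le_i=e_1+e_2$ for $i<k$ and $Le_k=-e_2$, so that the intersection is $\{0\}\cup\{e_i\}_{i<k}\cup\{e_i+e_k\}_{i<k}$, exploiting a negative coefficient, whereas your map $v\mapsto(v_1+v_2,\ v_1+v_3+\cdots+v_k)$ achieves the same odd count with nonnegative coefficients by splitting the tail variables unequally between the two outputs; your bookkeeping $2(k-1)+1=2k-1$ is right for all $k\ge 2$, and $k=1$ is handled as you note. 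Both constructions are equally elementary, so this is essentially the same approach with an interchangeable choice of witness map.
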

\begin{proof}
1. Consider the linear map $L:\RR^k \longrightarrow \RR$ defined by
$L e_i = 1$ for all $i=1,\ldots,k$. The only $v\in\HH^k$
such that $L v \in \HH^1$ are evidently $0$ and the $e_i$; any other 
vector $v\in \HH^k$ is a sum of at least two $e_i$'s, so $L v \geq 2$.

2. For the second number, consider $L:\RR^k \longrightarrow \RR^2$ defined by
$L e_i = e_1 + e_2$ for $i=1,\ldots,k-1$, and $L e_k = -e_2$. 
By inspection, the only $v\in\HH^k$ such that $L v \in \HH^2$ are evidently 
$0$, the $e_i$ and $e_i+e_k$, for $i=1,\ldots,k-1$.

3. For the third, consider $L:\RR^k \longrightarrow \RR^{k-1}$ defined by
$L e_i = e_i$ for $i=1,\ldots,k-1$, and $L e_k = e_1+e_2+\ldots+e_{k-1}$. 
Again, it is easy to see that the $v \in \HH^k$ with $L v \in\HH^{k-1}$
are precisely $v=e_k$ and any $v$ supported on the first $k-1$ 
coordinates, $v=\sum_{i=1}^{k-1} v_i e_i$ with $v_i\in\{0,1\}$ arbitrary.
\end{proof}

\medskip
These constructions show that every natural number eventually appears
in some $H(n,k) \subset H(\infty,k)$, for sufficiently large $k$.
Hence the ``right'' question is which numbers appear in $H(\infty,k)$ 
for a given $k$.

\begin{proposition}
  \label{prop:increase-n-k}
  For all $n\leq n'$ and $k\leq k'$, such that $n'-n \geq k'-k$,
  \[
    H(n,k) \subset H(n',k'), \ 
    \widetilde{H}(n,k) \subset \widetilde{H}(n',k'), \ 
    \widehat{H}(n,k) \subset \widehat{H}(n',k'), 
  \]
  where in the last relation we implicitly require $n\geq 2k$ and $n' \geq 2k'$.
\end{proposition}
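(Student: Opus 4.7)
The plan is to realise the passage $(n,k)\to(n',k')$ as a composition of two elementary enlargement moves, each of which preserves the intersection cardinality as well as the relevant structural constraint (general linear map, contraction, or isometry). Using the parametrisation $L:\RR^k\to\RR^m$ with $m=n-k$ introduced above, set $\Delta k:=k'-k$ and $\Delta m:=(n'-k')-(n-k)=\Delta n-\Delta k$, which is non-negative by hypothesis. My plan is to apply $\Delta m$ moves of type (A) that append one output coordinate, followed by $\Delta k$ moves of type (B) that append one input coordinate; the total increments in $n$ and $k$ are then $\Delta n$ and $\Delta k$ respectively, as desired.

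Move (A), of type $(n,k)\to(n+1,k)$, sends $L$ to $L^\ast(v):=(Lv,0)\in\RR^{m+1}$. Then $L^\ast v\in\HH^{m+1}$ iff $Lv\in\HH^m$, so the cardinality is unchanged; and the operator norm is preserved, so $L^\ast$ is an isometry (resp.\ a contraction) whenever $L$ is. Move (B), of type $(n,k)\to(n+1,k+1)$, sends $L$ to $L^\ast:\RR^{k+1}\to\RR^m$ with $L^\ast|_{\RR^k}=L$ and $L^\ast e_{k+1}=u$, for a vector $u\in\RR^m$ to be chosen. For $v'=(v,w)\in\HH^{k+1}$ one has $L^\ast v'=Lv+wu$, so the new intersection cardinality equals $t+|\{v\in\HH^k:Lv+u\in\HH^m\}|$; preserving $t$ amounts to choosing $u$ outside the finite ``forbidden set'' $\HH^m-L\HH^k$.

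For $H$, any generic $u$ (for instance $u=\alpha(1,\ldots,1)$ with $\alpha$ irrational) suffices. For $\widetilde H$, I would take $u$ in the orthogonal complement of $L\RR^k$ with $\|u\|\leq 1$, making $[L\mid u]$ a contraction; such a $u$ outside the forbidden set exists provided the complement is non-trivial, which the preceding moves of type (A) ensure. For $\widehat H$, $u$ must be a unit vector orthogonal to $L\RR^k$ inside $\RR^m$; after the (A)-phase, the orthogonal complement at the $j$-th move of type (B) has dimension $m'-k-j+1\geq m'-k'+1\geq 1$ (using the standing condition $m'\geq k'$), and whenever this exceeds $1$ the continuous unit sphere easily avoids the finite forbidden set.

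The main obstacle is the final move of type (B) in the isometry case exactly when $m'=k'$ (equivalently $n'=2k'$): the orthogonal complement is then one-dimensional, so $u$ can only be $\pm u_0$, and I must argue that at least one sign lies outside the forbidden set. Were both forbidden, there would exist $v_1,v_2\in\HH^k$ and $h_1,h_2\in\HH^m$ with $L(v_1+v_2)=h_1+h_2$ and $\langle h_1-h_2,u_0\rangle=2$; these constraints pin $L$ down rather rigidly, and I expect that the remaining freedom, either in the signs of the unit vectors chosen at the preceding moves of type (B) or in a preliminary modification of $L$ by a hypercube symmetry preserving $t$, will suffice to avoid such a coincidence.
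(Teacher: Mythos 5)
Your moves (A) and (B) are exactly the mechanism of the paper's proof, which in fact only treats the first inclusion: the paper appends a zero output row, and appends an input column $L'(e_{k+1})=(b,0,\ldots,0)$ with $b$ larger than $\sum_i |(Le_i)_1|+1$, which is just one explicit way of choosing the new column outside the finite forbidden set $\HH^m - L\HH^k$. So for $H(n,k)\subset H(n',k')$ your argument is correct and essentially identical to the paper's. The substance of your proposal lies in the contraction and isometry cases, and there it has two genuine gaps.

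For $\widetilde{H}$, your claim that ``the preceding moves of type (A) ensure'' a nontrivial orthogonal complement of $L\RR^k$ is false whenever $\Delta m := (n'-k')-(n-k)$ is smaller than $\Delta k$; the hypothesis only gives $\Delta m\geq 0$, so in the extreme case $n'-n=k'-k$ there are no (A)-moves at all. A contraction can be a co-isometry (e.g.\ $L=(\tfrac12,\tfrac12,\tfrac12,\tfrac12):\RR^4\to\RR$, which realises $t=7\in\widetilde H(5,4)$), and then $[L\mid u][L\mid u]^{\top}=LL^{\top}+uu^{\top}\preceq I$ forces $u=0$, which is always in the forbidden set (it doubles the count). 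So your column-appending scheme cannot even be started in that case; one has to either re-choose the realising contraction or extend in a structurally different way (for the example above, $7\in\widetilde H(6,5)$ is realised by the unrelated contraction $(\tfrac12,\tfrac12,\tfrac12,\tfrac14,\tfrac14)$). For $\widehat{H}$, you correctly isolate the problematic step, the last (B)-move when $m'=k'$, where only $u=\pm u_0$ is available, but ``I expect that the remaining freedom will suffice'' is not an argument: when $\Delta k=1$ there are no earlier (B)-moves whose signs you could adjust, and the isometry $L$ realising $t$ is given to you, not chosen. What actually has to be shown is that $u_0$ and $-u_0$ cannot both lie in $\HH^{m'}-L\HH^{k'-1}$; the relations $u_0\perp L\RR^{k'-1}$ and $\|u_0\|=1$ give constraints such as $|h_1|=|v_1|+1$ for any representation $u_0=h_1-Lv_1$, but they do not immediately yield a contradiction, so this step requires a real proof that is currently missing.
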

\begin{proof}
Let \( t \in H(n,k)\setminus \{0\}  \), so there is a linear map 
\(L:\RR^k\rightarrow \RR^m\) where \(t= |\HH^k\cap L^{-1}\HH^m |\). 
If \( L':\RR^{k+1}\rightarrow \RR^m \) is a linear map with \(L'|_{\RR^k}=L \) 
and \(L'(e_{k+1})=(b,0,\ldots,0)\) with \(b>\sum_{i=1}^k |(L(e_i))_1|+1 \) 
we have that \(L'(e_{k+1}+v)\notin \HH^m\) for all \(v\in \HH^k \), 
so \( |\HH^k\cap L'^{-1}\HH^m |=t \) that imply \(t\in H(n,k+1)\).

Now let \( t\in H(n+1,k)\) and consider a map \(L\) that realises \(t\).
If we define \(L':\RR^k\rightarrow \RR^{m+1}\) 
with \(L'(v)=(L(v),0)\), we have that \(t= |\HH^{k}\cap L'^{-1}\HH^{m+1} |\). 
So we can conclude that  $H(n,k) \subset H(n',k')$ for all $n \leq n'$, $k \leq k'$.
\end{proof}

\medskip
As with any combinatorial problem, when the structure is not 
evident from the start, we begin by experimenting with small numbers.
By inspection, and aided by the above constructions, we see that
\[
  H(\infty,1) = \{1,2\} \text{ and } H(\infty,2) = \{1,2,3,4\}.
\]
Furthermore,
\begin{equation}
  \label{eq:H(3)}
  H(\infty,3) \supset \{1,2,3,4,5,6,8\},
\end{equation}
where the cardinalities $3$ and $5$ come from Proposition~\ref{prop:k+1:2k-1}.
Also $6$ could be constructed directly, but we use the occasion
to point out a general principle, the direct sum construction
(Proposition~\ref{prop:directsum} below),
which gives us $6\in H(5,3)$, as $3\in H(3,2)$ times $2\in H(2,1)$.
The same principles give us
\begin{equation}
  \label{eq:H(4)}
  H(\infty,4) \supset \{1,2,3,4,5,6,7,8,9,10,12,16\}.
\end{equation}

\begin{proposition}
  \label{prop:directsum}
  If $t\in H(n,k)$ and $t'\in H(n',k')$, then $t\cdot t' \in H(n+n',k+k')$.
\end{proposition}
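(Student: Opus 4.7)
The plan is to use the block-diagonal (direct sum) construction: given realisers $L:\RR^k\to\RR^{n-k}$ of $t$ and $L':\RR^{k'}\to\RR^{n'-k'}$ of $t'$, form the linear map
\[
   L\oplus L':\RR^{k+k'}=\RR^k\oplus\RR^{k'} \longrightarrow \RR^{(n-k)+(n'-k')},
   \quad (v,v')\longmapsto (Lv, L'v').
\]
This map has domain of dimension $k+k'$ and codomain of dimension $(n+n')-(k+k')$, so it corresponds (via the parametrisation of subspaces given in the introduction) to a subspace of $\RR^{n+n'}$ of dimension $k+k'$, which is what the statement requires.

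The key observation is that under the identification $\HH^{k+k'}\cong \HH^k\times \HH^{k'}$, a pair $(v,v')$ satisfies $(L\oplus L')(v,v')\in \HH^{(n-k)+(n'-k')}$ if and only if $Lv\in\HH^{n-k}$ and $L'v'\in\HH^{n'-k'}$, because the two blocks of the image live in disjoint coordinate sets. Therefore the intersection count factorises:
\[
  \bigl|\HH^{k+k'} \cap (L\oplus L')^{-1}\HH^{(n-k)+(n'-k')}\bigr|
  = \bigl|\HH^{k} \cap L^{-1}\HH^{n-k}\bigr|\cdot \bigl|\HH^{k'}\cap L'^{-1}\HH^{n'-k'}\bigr|
  = t\cdot t'.
\]

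Thus $t\cdot t'\in H(n+n',k+k')$, as required. There is no real obstacle here: the argument is purely formal once one writes the subspaces as graphs of linear maps as in the introduction, and the factorisation of the intersection is immediate from the block-diagonal structure. The same construction works verbatim for the isometry and contraction variants, since a direct sum of isometries (respectively contractions) is again an isometry (respectively contraction), yielding analogous statements for $\widehat{H}$ and $\widetilde{H}$ should one wish to record them.
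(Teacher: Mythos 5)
Your proof is correct and is essentially identical to the paper's: both form the block-diagonal map $\Lambda(v\oplus v') = Lv \oplus L'v'$ and observe that membership of the image in the hypercube factorises over the two disjoint coordinate blocks, giving the product $t\cdot t'$. The extra remark about the isometry and contraction variants is a nice bonus but not needed for the statement.
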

\begin{proof}
As \(t\in H(n,k)\), there is \(L:\RR^k\rightarrow\RR^m\) with 
\( t=|\HH^k \cap L^{-1}\HH^m|\); the same happens with  
\( t'=|\HH^{k'} \cap L'^{-1}\HH^{m'}|\). 
Let \(\Lambda:\RR^k\oplus \RR^{k'}= \RR^{k+k'}\rightarrow \RR^{m+m'} = \RR^m\oplus \RR^{m'}\) 
be defined by \(\Lambda(v\oplus v')=L(v)\oplus L'(v')\). 
Observe that \(\Lambda(\HH^k\oplus 0) \cap \Lambda(0 \oplus \HH^{k'})=\{0\} \), 
so \(v\oplus v'\in \Lambda^{-1}\HH^{m+m'} \) if and only if 
\(v\in L^{-1}\HH^m\) and \(v'\in L'^{-1}\HH^{m'}\), which gives \(t\cdot t'\) possibilities.
\end{proof}

\medskip
We observe the gaps appearing in the lists for $k=3$ and $k=4$:
while the lower ranges are filling up in accordance with the
constructions given in Proposition~\ref{prop:k+1:2k-1} and the
other observations we made, close to the maximum we could not
find any subspaces realising cardinalities just under $2^k$.
This is no coincidence or lack of imagination, as we show in the
next section. Indeed, Theorem~\ref{thm:Hnk-gap} below shows
$7\not\in H(\infty,3)$ and $13,\,14,\,15 \not\in H(\infty,4)$; 
this leaves as the only number unresolved $11$ in the list of $H(\infty,4)$.

More generally, for every $k$, we have $2^k-2^{k-2} = 3\cdot 2^{k-2} \in H(k+1,k)$ 
by applying the direct sum construction to $3\in H(3,2)$ and $2^{k-2} \in H(k-2,k-2)$;
and it turns out that this is the second largest number in $H(\infty,k)$
(Theorem~\ref{thm:Hnk-gap} in the next section).

\medskip
We now show some more constructions of a general nature.

\begin{proposition}
  \label{prop:binom:knapsack}
  For all $n \geq \ell$, $k \geq \ell-1$, we have ${\ell \choose r} \in H(n,k)$.
  More generally, consider a knapsack problem with weights $p_i \geq 0$ ($i=1,\ldots,\ell$)
  and a total weight $q \geq 0$,
  \[
    \left|\left\{ v\in\HH^\ell : \sum_{i=1}^\ell p_i v_i = q \right\}\right| \in H(n,k).
  \]
  In other words, the number of solutions of a knapsack problem, i.e.~the
  number of ways of writing $q$ as a sum of a subset of the numbers $\{p_i\}$,
  is in $H(n,k)$.
\end{proposition}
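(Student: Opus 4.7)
The plan is to realise the knapsack count as the size of the intersection of $\HH^\ell$ with a suitable linear hyperplane of $\RR^\ell$, and then to lift this to the desired parameters $(n,k)$ via Proposition~\ref{prop:increase-n-k}. The binomial-coefficient statement will then drop out by specialising $p_i=1$ for all $i$ and $q=r$, since the knapsack solutions are then precisely the $v \in \HH^\ell$ of Hamming weight $r$.

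First I would consider the affine hyperplane $H_q := \{v \in \RR^\ell : \sum_{i=1}^\ell p_i v_i = q\}$. Assuming the weights are not all zero (the degenerate cases being immediate), this has dimension $\ell-1$, and the knapsack count is $t := |\HH^\ell \cap H_q|$. We may assume $t \geq 1$, since otherwise there is nothing to prove. Picking any witness $w \in \HH^\ell \cap H_q$, I would apply the cube symmetry $v_i \mapsto 1-v_i$ at every coordinate where $w_i=1$, exactly as in the introductory remark that every affine subspace with a nonempty cube intersection can be moved by hypercube symmetries to a linear subspace with the same intersection. This symmetry sends $w$ to $0$, fixes $\HH^\ell$ setwise, and maps $H_q$ to the linear hyperplane $\{\tilde v : \sum_i (-1)^{w_i} p_i \tilde v_i = 0\}$ through the origin. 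Since cube symmetries preserve intersection cardinalities, this already establishes $t \in H(\ell,\ell-1)$.

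Next I would invoke Proposition~\ref{prop:increase-n-k} with old parameters $(\ell,\ell-1)$ and new parameters $(n,k)$. The hypotheses $n \geq \ell$ and $k \geq \ell-1$ are given in the statement, while the inequality $n-\ell \geq k-(\ell-1)$ simplifies to $n \geq k+1$, i.e.\ the standing assumption $n>k$ in force throughout the paper. Therefore $H(\ell,\ell-1) \subset H(n,k)$, and we conclude $t \in H(n,k)$.

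The only genuinely nontrivial step is the affine-to-linear reduction, but this is just the hypercube symmetry argument already spelled out in the introduction, so there is no real obstacle; the rest is a bookkeeping exercise combining this reduction with the monotonicity of Proposition~\ref{prop:increase-n-k} and then specialising to $p_i=1$, $q=r$ for the binomial case.
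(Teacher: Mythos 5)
Your proof is correct and follows essentially the same route as the paper's: realise the knapsack count as the intersection of $\HH^\ell$ with the hyperplane $\sum_i p_i v_i = q$, which gives membership in $H(\ell,\ell-1)$, then apply Proposition~\ref{prop:increase-n-k} (using the standing assumption $n>k$) and specialise to $p_i=1$, $q=r$ for the binomial case. The only difference is that you spell out the affine-to-linear reduction via cube symmetries, a step the paper absorbs into its definition of $H(n,k)$.
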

\begin{proof}
By Proposition~\ref{prop:increase-n-k}, we only have to prove the claims
for $n=\ell$ and $k=\ell-1$. The first one is a special case of the second
by letting all $p_i=1$ and $q=r$.

To prove the second claim, notice that the equation $\sum_{i=1}^\ell p_i v_i = q$
defines an affine hyperplane (subspace of dimension $\ell-1$) in $\RR^\ell$,
thus by definition, its cardinality of intersection with $\HH^\ell$ is in
$H(\ell,\ell-1)$.
\end{proof}

\begin{proposition}
  \label{prop:binom+1}
  We have \( {\ell \choose r}+1 \in H(n,k) \) when \(\ell \leq k < n\).
\end{proposition}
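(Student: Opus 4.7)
The plan is to reduce via Proposition~\ref{prop:increase-n-k} to the single case $H(\ell+1,\ell)$, and then exhibit an explicit affine hyperplane in $\RR^{\ell+1}$ with the desired intersection count. Concretely, the hypotheses $\ell \leq k < n$ yield $\ell+1 \leq n$, $\ell \leq k$, and $n-(\ell+1) \geq k-\ell$ (this last inequality being equivalent to $n > k$), so Proposition~\ref{prop:increase-n-k} guarantees that any element of $H(\ell+1,\ell)$ also lies in $H(n,k)$.

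I would first dispense with the trivial cases $r=0$ and $r=\ell$, for which $\binom{\ell}{r}+1 = 2$ is already realised as a power of $2$. For the interesting range $1 \leq r \leq \ell-1$, I would mimic the knapsack construction of Proposition~\ref{prop:binom:knapsack}, but with an extra coordinate designed to contribute exactly one further lattice point. Specifically, I would take the affine hyperplane
\[
  \pi = \bigl\{ v \in \RR^{\ell+1} : v_1+v_2+\cdots+v_\ell + r\, v_{\ell+1} = r \bigr\},
\]
which is $\ell$-dimensional because it is cut out by a single nontrivial linear equation. The intersection $\HH^{\ell+1} \cap \pi$ splits according to the value of $v_{\ell+1}$: on the slice $v_{\ell+1}=0$ the equation reduces to $\sum_{i=1}^{\ell} v_i = r$, giving precisely the $\binom{\ell}{r}$ weight-$r$ binary strings; on the slice $v_{\ell+1}=1$ the equation becomes $\sum_{i=1}^{\ell} v_i = 0$, whose unique $0/1$ solution is $v = e_{\ell+1}$. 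Adding these yields $\binom{\ell}{r}+1$, as required.

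Being an explicit construction, I anticipate no serious obstacle. The only subtlety is choosing the coefficient of $v_{\ell+1}$ (here $r$) so that the new slice contributes exactly one hypercube vertex; taking instead any value $M$ with $r - M \in \{0,\ell\}$ works equally well, landing on $e_{\ell+1}$ or on $e_1+\cdots+e_\ell$ respectively. Once this choice is made, the proof is just the knapsack count above followed by the transfer through Proposition~\ref{prop:increase-n-k}.
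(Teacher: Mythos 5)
Your proof is correct and is essentially the paper's construction in a different guise: both count the solutions of a single knapsack-type equation over the hypercube, the only difference being that the paper works directly with the linear map $L:\RR^k\to\RR$, $Le_i=\frac1r$ for $i\leq\ell$ and $Le_i=2$ for $i>\ell$ (so the ``$+1$'' is the origin), whereas you use an affine hyperplane with an auxiliary coordinate of weight $r$ and then invoke Proposition~\ref{prop:increase-n-k}. Indeed, reflecting your coordinate $v_{\ell+1}\mapsto 1-v_{\ell+1}$ turns your hyperplane into exactly the graph of the map $e_i\mapsto\frac1r$ on $\RR^\ell$, with the remaining $k-\ell$ coordinates blocked by the monotonicity proposition instead of by the explicit weight $2$; both versions are fine.
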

\begin{proof}
Let the linear map \(L:\RR^k\rightarrow \RR\) be given by	
\[
  Le_i=\begin{cases}
         \frac 1r & \text{for } i\leq \ell,\\
         2	& \text{for } i>\ell.
       \end{cases}
\]
Then, \(v\in \HH^k\) is mapped to $\HH$ if and only if 
\(v=0\) or \(v=e_{j_1}+\cdots +e_{j_r}\) with \(j_i\neq j_{i'}\) 
when \(i\neq i'\) and \(j_i\in [\ell]=\{1,2\ldots ,\ell\}\). So we have that 
\( {\ell \choose r}+1 \in H(n,k) \) when \(\ell \leq k\).
\end{proof}

\begin{proposition}
For all \(k\geq 1\), and \(r<t<k\), it holds that  
\(2^t+2^r\in H(\infty,k)\).
If \(t\leq k-2\), we have \(2^t+2^r+1\in H(\infty,k)\).
\end{proposition}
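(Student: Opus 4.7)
The plan is to exhibit explicit linear maps realising the claimed cardinalities in the smallest possible $k$, and then to invoke Proposition~\ref{prop:increase-n-k} to pass to $H(\infty,k)$.

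For the first assertion, I would take $L : \RR^{t+1} \to \RR^t$ with $L e_i = e_i$ for $i \le t$ and $L e_{t+1} = e_{r+1} + e_{r+2} + \ldots + e_t$. A case-split on $v_{t+1} \in \{0,1\}$ should do the counting: if $v_{t+1} = 0$ then every $(Lv)_i = v_i \in \{0,1\}$ holds automatically, contributing $2^t$ preimages; if $v_{t+1} = 1$ then the equation $(Lv)_i = v_i + 1 \in \{0,1\}$ forces $v_i = 0$ for $r < i \le t$, leaving $v_1,\ldots,v_r$ free and contributing $2^r$ more. This gives $2^t + 2^r \in H(2t+1, t+1)$, and since $k \ge t+1$, Proposition~\ref{prop:increase-n-k} finishes the first claim. (Equivalently, this first part follows by applying Proposition~\ref{prop:directsum} to $2^{t-r}+1 \in H(2(t-r)+1,t-r+1)$, coming from Proposition~\ref{prop:k+1:2k-1}(3), and to $2^r \in H(r,r)$.)

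For the second assertion, I would extend the map above by one more source coordinate whose image is the \emph{all-ones} vector of $\RR^t$: let $L : \RR^{t+2} \to \RR^t$ agree with the previous $L$ on $e_1,\ldots,e_{t+1}$ and set $L e_{t+2} = e_1 + e_2 + \ldots + e_t$. A case analysis on $(v_{t+1},v_{t+2}) \in \HH^2$ should yield: the case $(0,0)$ reproduces the $2^t$-subcube; the case $(1,0)$ reproduces the $2^r$-subcube; the case $(0,1)$ shifts every one of the $t$ target coordinates by $+1$, so $(Lv)_i = v_i + 1 \in \{0,1\}$ forces $v_1 = \ldots = v_t = 0$, contributing the single extra preimage $e_{t+2}$; and the case $(1,1)$ shifts the coordinates $r<i\le t$ by $+2$, giving $(Lv)_i = v_i + 2 \notin \{0,1\}$ and hence zero contribution. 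Summing yields $2^t + 2^r + 1 \in H(2t+2, t+2)$, and the hypothesis $t \le k-2$ together with Proposition~\ref{prop:increase-n-k} upgrades this to $H(\infty,k)$.

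The delicate point, and the reason I use $m = t$ target coordinates rather than a leaner choice, is forcing uniqueness in the $(0,1)$ case: making $L e_{t+2}$ the all-ones vector of $\RR^t$ pins \emph{every} $v_i$ ($i \le t$) to $0$, so only $e_{t+2}$ is added, while simultaneously making the $(1,1)$ case infeasible for free. Any construction that omits some target coordinate from the support of $L e_{t+2}$ would allow whole subfamilies of preimages with $v_{t+2}=1$ to creep into the count.
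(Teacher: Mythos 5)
Your proposal is correct and follows essentially the same route as the paper: an identity block on the first $t$ coordinates, one extra generator sent to the indicator vector of a block of size $t-r$ (giving the $2^t+2^r$ count), and for the $+1$ case a further generator sent to an all-ones-type vector whose full support pins down a unique extra preimage. The only cosmetic difference is that the paper builds the map directly as $L:\RR^k\to\RR^k$, killing the unused source coordinates via $Le_i=-e_i$, whereas you work in the minimal dimensions and pad afterwards with Proposition~\ref{prop:increase-n-k}; both case analyses are sound.
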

\begin{proof}
Let \(L:\RR^k \rightarrow \RR^k \) be a linear map given by
\[
  L e_i := \begin{cases}
                e_i                 & \text{ for } 1 \leq i \leq t, \\
                e_1+ \ldots +e_\ell & \text{ for } i = t+1,         \\
               -e_i                 & \text{ for } t+2 \leq i \leq k,
            \end{cases}
\]        
where \(\ell=t-r\). It is easy to see that \(2^t+2^r=|\HH^k \cap L^{-1}\HH^k | \) ,
which means \( 2^t+2^r\in H(2k,k) \subset H(\infty,k) \).

When \(t<k-1\), we modify the previous map \(L\) by letting
\(L e_{t+2} = e_1+\cdots +e_{t+2}\). 
Again, it is easy to check that
\( |\HH^k \cap L^{-1}\HH^k | = 2^t+2^r+1\).
\end{proof}     

\medskip
We can generalise this construction as follows.

\begin{proposition}
  \label{prop:powers-of-2}
  Let \(k-j \geq t_j>\cdots>t_1>t_0\geq 0\). Then,
  \( \sum_{i=0}^j 2^{t_i} \in H(\infty,k) \).
\end{proposition}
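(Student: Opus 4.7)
The plan is to directly generalise the construction from the preceding proposition by introducing one ``switch coordinate'' for each summand $2^{t_i}$ with $i<j$. I take $m=k$ (so the subspace lives in $\RR^{2k}$, giving membership in $H(2k,k)\subset H(\infty,k)$) and define $L:\RR^k\rightarrow\RR^k$ by letting it act as the identity on $e_1,\ldots,e_{t_j}$; on the ``switch'' block by
\[
  L e_{t_j+s} \ := \ e_{t_{j-s}+1}+e_{t_{j-s}+2}+\cdots +e_{t_j}
  \quad\text{for } s=1,\ldots,j;
\]
and by $Le_i:=-e_i$ on the remaining $k-t_j-j\geq 0$ coordinates (the inequality $t_j\leq k-j$ is exactly what ensures the switch block fits). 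The leading summand $2^{t_j}$ will come from all $\HH$-vectors supported on the first $t_j$ coordinates, while switch $s$ being the unique active switch will contribute $2^{t_{j-s}}$.

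Then I would analyse which $v\in\HH^k$ satisfy $Lv\in\HH^k$. The last block immediately forces $v_i=0$ for $t_j+j<i\leq k$. Writing $w_s:=v_{t_j+s}$, the condition on coordinate $i\leq t_j$ becomes
\[
  v_i+\sum_{s=1}^{j}w_s\,\mathbf{1}[i>t_{j-s}]\in\{0,1\}.
\]
If all $w_s=0$, this is free and yields the $2^{t_j}$ solutions of the leading summand. If exactly one $w_s=1$, the indicator forces $v_i=0$ for $i>t_{j-s}$ and leaves $v_1,\ldots,v_{t_{j-s}}$ free, giving $2^{t_{j-s}}$ solutions.

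The only delicate point---and the main obstacle, though a mild one---is showing that multiple switches cannot be simultaneously on. If $s<s'$ both have $w_s=w_{s'}=1$, then since $t_{j-s'}<t_{j-s}$, every coordinate $i$ with $t_{j-s}<i\leq t_j$ receives a contribution of at least $2$ from the switches, which cannot be absorbed by any $v_i\in\{0,1\}$. Summing the surviving cases gives
\[
  2^{t_j}+\sum_{s=1}^{j}2^{t_{j-s}} \ = \ \sum_{i=0}^{j}2^{t_i},
\]
as claimed. No further ingredient is needed; the argument is a careful generalisation of the $j=1,2$ cases already handled.
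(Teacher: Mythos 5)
Your construction is correct and is essentially the paper's own: the only difference is that you place the $j$ ``switch'' coordinates immediately after the first $t_j$ coordinates, whereas the paper indexes them as $e_{k-\delta}$ at the end of the block, which is just a permutation of the domain. The case analysis (no switch, exactly one switch, two switches forcing a coordinate $\geq 2$) matches the paper's argument step for step.
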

\begin{proof}
We construct the linear map \(L:\RR^k \rightarrow \RR^k \) as follows:
\[
  L e_i := \begin{cases}
              e_i & \text{ for } i \leq t_j ,\\
             -e_i & \text{ for } t_j < i \leq k-j, \\
              \sum_{\nu=t_\delta+1}^{t_j} e_\nu & \text{ for } i = k-\delta,
            \end{cases}
\]
where the last case ranges over $\delta=0,\ldots,j-1$.

It is checked straightforwardly that $Lv \in \HH^k$, for 
$v=\sum_i v_i e_i \in \HH^k$, if and only if all $v_i=0$ ($t_j < i \leq k-j$),
and at most one of $v_{k-\delta}=1$ ($\delta=0,\ldots,j-1$); if all
are $0$, we set $\delta=j$, and then the $v_i$ with $i>t_j$
must be $0$, whereas the first $t_j$ are free in $\{0,1\}$.
This shows that \( |\HH^k \cap L^{-1}\HH^k | = \sum_{i=0}^j 2^{t_i}\).
\end{proof}

\medskip
We close this section by looking at what we can say about $k=5$:
From the above constructions we get directly
\[
  H(\infty,5) \supset \{1,2,3,4,5,6,7,8,9,10,12,14,15,16,17,18,20,24,32\}.
\]
However, also $11$ and $13 \in H(\infty,5)$, using the following 
joint generalisation of constructions one and three in 
Proposition~\ref{prop:k+1:2k-1} 
(alternatively by Proposition~\ref{prop:powers-of-2} above).

\begin{proposition}
  \label{prop:t+1}
  Let $t\in H(n,k)$ be such that there is a linear map
  $L:\RR^k \longrightarrow \RR^m$ with $t = \bigl| \HH^k \cap L^{-1}\HH^m \bigr|$,
  and the additional property that for every $v\in\HH^k\setminus \{0\}$
  there exists a coordinate $j\in[m]$ such that $(L v)_j > 0$.
  
  Then, $t+1 \in H(n+1,k+1)$, and there exists a linear map
  $\Lambda:\RR^{k+1} \longrightarrow \RR^m$ with 
  $t+1 = \bigl| \HH^k \cap \Lambda^{-1}\HH^m \bigr|$,
  which has the same positivity property as $L$.
\end{proposition}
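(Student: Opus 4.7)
The plan is to extend $L$ by one new domain coordinate, keeping the codomain $\RR^m$ fixed. I would define $\Lambda:\RR^{k+1}\to\RR^m$ by $\Lambda|_{\RR^k} := L$ and $\Lambda(e_{k+1}) := \mathbf{1} := (1,1,\ldots,1)\in\RR^m$. Since $\mathbf{1}\in\HH^m$, the vector $e_{k+1}$ lies in $\HH^{k+1}\cap\Lambda^{-1}\HH^m$. Meanwhile, every old solution $v\in\HH^k\cap L^{-1}\HH^m$, embedded as a vector with $(k{+}1)$-th coordinate zero, is still a solution since $\Lambda(v)=Lv$. So we gain at least one solution, and $\Lambda$ corresponds to a $(k{+}1)$-dimensional subspace of $\RR^{(k+1)+m}=\RR^{n+1}$, yielding $t+1\in H(n+1,k+1)$ once I confirm that no \emph{other} new solutions appear.

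The heart of the argument is ruling out spurious solutions. Write $v = v' + v_{k+1}e_{k+1}\in\HH^{k+1}$ with $v'\in\HH^k$ and $v_{k+1}\in\{0,1\}$. The case $v_{k+1}=0$ just reproduces the $t$ old solutions. The case $v_{k+1}=1,\ v'=0$ is the single new solution $e_{k+1}$. In the remaining case $v_{k+1}=1$ and $v'\neq 0$, the positivity hypothesis on $L$ supplies a coordinate $j\in[m]$ with $(Lv')_j>0$, so $(\Lambda v)_j = (Lv')_j + 1 > 1$ and hence $\Lambda v\notin\HH^m$. Summing up gives exactly $t+1$ solutions. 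This step is precisely why the positivity hypothesis is built into the statement: it is exactly what prevents accidental cancellations from producing extra solutions.

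It remains to verify the positivity property for $\Lambda$. For a nonzero $v = v' + v_{k+1}e_{k+1}\in\HH^{k+1}$: if $v_{k+1}=0$ then $v'\neq 0$ and positivity of $L$ applies verbatim; if $v_{k+1}=1$ and $v'=0$ then $\Lambda v=\mathbf{1}$ has every coordinate positive; and if $v_{k+1}=1$ with $v'\neq 0$, the same coordinate $j$ that witnesses positivity of $Lv'$ gives $(\Lambda v)_j > 1 > 0$. I foresee no substantive obstacle: the whole argument is essentially the conceptual generalisation of part~3 of Proposition~\ref{prop:k+1:2k-1} (the special case $L=\id$ on $\RR^{k-1}$), with the positivity hypothesis promoted from an observation in that special case to the guiding assumption ensuring the arithmetic cannot overflow back into $\HH^m$.
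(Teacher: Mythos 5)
Your proof is correct and follows exactly the paper's own argument: the same construction $\Lambda e_{k+1}=e_1+\cdots+e_m$, the same decomposition $v=v'+v_{k+1}e_{k+1}$, and the same use of the positivity hypothesis to exclude spurious solutions with $v_{k+1}=1$, $v'\neq 0$. You merely spell out the case analysis and the inheritance of the positivity property in slightly more detail than the paper does.
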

\begin{proof}
As before, we consider \(\{e_1,\cdots, e_{k+1} \}\) as a linear basis  
of $\RR^{k+1}$ and identify \(\RR^k\) with the subspace spanned by \(\{e_1,\cdots, e_k\}\).
Then, $\Lambda$ is defined by letting
\[
  \Lambda e_i := \begin{cases}
                   L e_i              & \text{ for } i=1,\ldots,k, \\
                   e_1+e_2+\ldots+e_m & \text{ for } i=k+1.
                 \end{cases}
\]
This has the desired properties: Indeed, the positivity of
coordinates is inherited from $L$. Furthermore, $v\in \HH^{k+1}$
can be written as $v = v' + v_{k+1}e_{k+1}$ with $v'\in\HH^k$, so
$\Lambda v = L v' + v_{k+1}(e_1+\ldots+e_m)$, which is in $\HH^m$
iff $v_{k+1}=0$ and $L v'\in\HH^m$, or $v_{k+1}=1$ and $v'=0$.
\end{proof}

\section{Bounds on the largest numbers in $H(n,k)$} \label{sec:Hnk}

As we have observed, the largest element of $H(\infty,k)$ is trivially 
$2^k$; what is the gap to the second largest element, denoted $h_2(k)$?
It is easy to see that $h_2(1)=1$ and $h_2(2)=3$, and we have observed 
above that $h_2(k) \geq 2^k-2^{k-2}$ in general. As it turns out, this 
is tight.

\begin{theorem}
  \label{thm:Hnk-gap}
  For any $k \geq 2$, the second largest number in $H(\infty,k)$ equals
  $h_2(k) = 2^k-2^{k-2}$. It is contained in $H(n,k)$ for all $n > k$.
\end{theorem}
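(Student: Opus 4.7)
The lower bound $3\cdot 2^{k-2} = 2^k - 2^{k-2} \in H(n,k)$ for every $n > k$ is the easier half: combine $3 \in H(3,2)$ with $2^{k-2} \in H(k-2,k-2)$ via Proposition~\ref{prop:directsum} to place $3\cdot 2^{k-2}$ in $H(k+1,k)$, and then apply Proposition~\ref{prop:increase-n-k} to reach every $n > k$.

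For the matching upper bound, I plan to induct on $k$, with base $k=2$ being the already-recorded equality $H(\infty,2) = \{1,2,3,4\}$. For the inductive step with $k \geq 3$, suppose $t = |\HH^k \cap L^{-1}\HH^m| > 3\cdot 2^{k-2}$, and for each coordinate $j \in [k]$ partition $T := \HH^k \cap L^{-1}\HH^m$ as $T_j^0 \sqcup T_j^1$ according to the value of $v_j$. Each piece is, up to a hypercube symmetry, an intersection of a $(k-1)$-dimensional subspace with $\HH^{k-1}$, so $|T_j^\epsilon| \in H(\infty,k-1) \cup \{0\}$. Since $|T_j^0| + |T_j^1| = t > 2\cdot(3\cdot 2^{k-3})$, at least one side exceeds $3\cdot 2^{k-3}$ and so, by the inductive hypothesis, must equal $2^{k-1}$. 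If both sides equal $2^{k-1}$ for some $j$ we conclude $t = 2^k$; otherwise, for each $j$ there is $\epsilon_j \in \{0,1\}$ with $T$ containing the face $F_j := \{v \in \HH^k : v_j = \epsilon_j\}$. Taking the union of the $F_j$ forces $T \supset \HH^k \setminus \{\bar\epsilon\}$ for the vertex $\bar\epsilon$ with $\bar\epsilon_j = 1-\epsilon_j$, leaving $|T| \in \{2^k - 1,\, 2^k\}$.

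The main obstacle is to rule out the one-missing-vertex case $T = \HH^k \setminus \{\bar\epsilon\}$. Taking $L$ linear, $0 \in T$ forces $\bar\epsilon \neq 0$; writing $L(e_j) = \chi_{A_j}$ for each $j$ with $e_j \in T$, I exploit the equivalence $e_i + e_j \in T \Leftrightarrow A_i \cap A_j = \emptyset$ (valid once $L(e_i), L(e_j) \in \HH^m$) and split on the Hamming weight $w := |\bar\epsilon|_1$. If $w \geq 3$, every weight-$2$ vector $e_i + e_j$ lies in $T$, so all the $A_j$'s are pairwise disjoint and $L(\bar\epsilon) = \sum_{j:\bar\epsilon_j=1} \chi_{A_j}$ comes out $\{0,1\}$-valued, contradicting $\bar\epsilon \notin T$. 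If $w = 2$ with $\bar\epsilon = e_{i_0}+e_{j_0}$, I invoke a third coordinate $\ell \neq i_0, j_0$ (available since $k \geq 3$) and use $e_{i_0}+e_{j_0}+e_\ell \in T$ to recover the missing disjointness $A_{i_0} \cap A_{j_0} = \emptyset$, reaching the same contradiction. If $w = 1$ with $\bar\epsilon = e_{j_0}$, I analyse $\xi := L(e_{j_0})$ coordinate by coordinate: the constraints $\xi + \chi_{A_j} \in \HH^m$ for $j \neq j_0$, together with the pairwise disjointness of the remaining $A_j$'s, pin $\xi$ to be $\{0,1\}$-valued, so $e_{j_0} \in T$---again a contradiction.

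I expect the bulk of the delicate work to sit in this last case analysis: the inductive splitting cleanly determines $T$ only up to a single missing vertex, and converting ``almost all of $\HH^k$'' into ``all of $\HH^k$'' requires squeezing the linearity of $L$ through several different weight strata of the cube.
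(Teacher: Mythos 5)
Your proposal is correct, but it takes a genuinely different route from the paper. The paper's proof is a direct, non-inductive case analysis on the images $f_i=Le_i$ of the standard basis vectors: either every $f_i$ lies in $\HH^m$ with pairwise disjoint supports (intersection $2^k$); or two supports overlap, in which case non-negativity of all images kills the whole quarter-cube $e_i+e_j+\HH^{[k]\setminus\{i,j\}}$; or some $f_i\notin\HH^m$, in which case a pigeonhole argument forces $f_i\in\{0,\pm1\}^m$ with a $-1$ entry, and a single output coordinate already excludes $2^{k-2}$ cube points. You instead induct on $k$ by slicing $T$ along the facets $\{v_j=\epsilon\}$, using that each slice lies in $H(\infty,k-1)\cup\{0\}$ (which is legitimate for $j\in[k]$ once $S$ is parametrised as a graph over the first $k$ coordinates, so each slice is a genuine $(k-1)$-dimensional affine section), concluding $T\supseteq\HH^k\setminus\{\bar\epsilon\}$ and then excluding the one-missing-vertex configuration by the weight-stratified disjointness analysis; I checked that analysis and it goes through, including the $w=1$ case, which is exactly where $k\geq 3$ is needed (for $k=2$ the configuration $T=\HH^2\setminus\{e_2\}$ is realisable, consistent with your base case). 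The trade-off: the paper's argument is shorter and entirely local, never needing to confront the near-full cube; your facet-slicing induction is more systematic and is the kind of technique that scales to characterising further large elements of $H(\infty,k)$ (it is close in spirit to the methods of the Groenland--Johnston follow-up cited in the note added), at the price of the extra linearity argument needed to rule out $|T|=2^k-1$.
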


\begin{proof}
We distinguish a number of different cases. It will be crucial to
consider the images of the basis vectors, $f_i := L e_i$, and for
each of them the \emph{support} $F_i = \{ j\in[k] : (f_i)_j \neq 0 \}$ 
of indices where $f_i$ has non-zero coefficients.

\begin{enumerate}
\item If for all $i\in[k]$, $f_i := L e_i\in \HH^m$ and for all $i\neq j$,
  $L(e_i+e_j)\in \HH^m$, it means that the $f_i$ are the characteristic
  functions of the \emph{disjoint} sets $F_i$, $F_i\cap F_j = \emptyset$.
  Thus, for all $x \in \HH^k$, we have by linearity $L x = \sum_i x_i f_i \in \HH^m$,
  and so the intersection is $2^k$.

\item If still for all $i\in[k]$, $f_i = L e_i \in \HH^m$, but there exists
  a pair $i\neq j$, such that $L(e_i+e_j) \not\in \HH^m$, then 
  the assumption means that $F_i\cap F_j\neq \emptyset$, and hence $L(e_i+e_j)$
  has a coordinate equal to $2$.
  But since $L x$ for $x \in\HH^k$ is component-wise non-negative,
  we then have that $L(x+e_i+e_j) \not\in \HH^m$ for all
  $x \in \HH^{[k]\setminus\{i,j\}} \subset \HH^k$ (we think of these
  as the elements of $\HH^k$ with $i$th and $j$th coordinate $0$).
  Thus, the intersection is at most $2^k-2^{k-2}$.

\item It remains to consider the case that for some $i$, $L e_i \not\in \HH^m$,
w.l.o.g.~$i=k$. If the intersection size $\bigl| L^{-1} \HH^m \cap \HH^k \bigr|$ is
  at most $2^{k-1}$, we are done; if it is 
  strictly larger than $2^{k-1}$, we can use the pigeon hole principle
  to deduce that there exists a $v\in\HH^{k-1}$,
  such that $e_k+v\in\HH^k$ and $L v,\, L(e_k+v)\in\HH^m$. Hence,
  $L e_k = L(e_k+v) - L v \in \{0,1,-1\}^m =: \EE^m$, where we have
  introduced notation for the \emph{extended hypercube}.
  By assumption $L e_k \not\in\HH^m$, so this vector must have one
  coordinate $-1$, w.l.o.g.~the $m$th: $(L e_k)_m = -1$.
  Also, necessarily $(L v)_m = 1$, and because $L v = \sum_{i=1}^{k-1} v_i L e_i$,
  there must exist an $i < k$ such that $(L e_i)_m > 0$, w.l.o.g.~$i=k-1$.
  Now, consider any $u\in \HH^{k-2}$: Either $(L u)_m \not\in \{1,2\}$,
  but then $\bigl( L(u+e_k) \bigr)_m \not\in \{0,1\}$ and so $L(u+e_k) \not\in \HH^m$;
  or $(L u)_m \in \{1,2\}$, but then it follows $\bigl( L(u+e_{k-1}) \bigr)_m > 1$ and so
  $L(u+e_{k-1}) \not\in \HH^m$. 
  In any case, we find $2^{k-2}$ vectors $u'\in\HH^k$ such that $L u'\not\in\HH^m$,
  hence $\bigl| L^{-1} \HH^m \cap \HH^k \bigr| \leq 2^k-2^{k-2}$.
\end{enumerate}

This concludes the proof, because we found that all of the intersection
numbers $\bigl| L^{-1} \HH^m \cap \HH^k \bigr|$ are either equal to $2^k$
or are $\leq 2^k-2^{k-2}$. 
\end{proof}

\bigskip
So, what about the third largest number? Note that Theorem~\ref{thm:Hnk-gap}
says that $\sup_k 2^{-k} h_2(k) = \frac34$, and the maximum is attained 
already for $k=2$. So it may make sense to look at it in terms of
$\sup_k 2^{-k} h_3(k)$, etc. By Proposition~\ref{prop:k+1:2k-1} applied to
$k=3$, we get $5\in H(5,3)$, i.e.~a ratio of $\frac58$ is obtained. What about
$\frac{11}{16}$, or even larger ones? 
Note that in dimension $k$, Proposition~\ref{prop:k+1:2k-1} gives us 
$2^{k-1}+1\in H(\infty,k)$, so a ratio of $\frac12 + 2^{-k}$ is realised. 
Based on the evidence of cases we looked at, the following seems
reasonable.

\begin{conjecture}
\label{conj:large}
For all $k \geq 1$,
\[
  H(\infty,k) \cap \{n\in\NN : n > 2^{k-1}\} = \{2^{k-1}+2^{i} : 0 \leq i < k\}.
\]
To prove this, we would however need a far-reaching extension of the
method used to characterise the largest two numbers.
\end{conjecture}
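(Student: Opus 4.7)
The inclusion $\supset$ is immediate from Proposition~\ref{prop:powers-of-2} with $j=1$, $t_1=k-1$, $t_0=i$: this gives $2^{k-1}+2^i\in H(\infty,k)$ for $0\le i\le k-2$, and $i=k-1$ recovers the trivial $2^k$.

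\textbf{Upper bound and the poset subcase.} Assume $|S|>2^{k-1}$ for $S:=\HH^k\cap L^{-1}\HH^m$. A per-coordinate pigeonhole exactly as in Theorem~\ref{thm:Hnk-gap} gives, for each basis vector $e_i$, some $v\in S$ with $v_i=0$ and $v+e_i\in S$, so $Le_i=L(v+e_i)-Lv\in\EE^m$. Thus each column of $L$ lies in $\{-1,0,1\}^m$, and every entry of $L$ is in $\{-1,0,1\}$. Writing each row $r_j$ via $I_j^+:=\{i:(r_j)_i=1\}$ and $I_j^-:=\{i:(r_j)_i=-1\}$, the condition $v\in S$ becomes $|\mathrm{supp}(v)\cap I_j^+|-|\mathrm{supp}(v)\cap I_j^-|\in\{0,1\}$ for all $j$. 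Rows with $|I_j^-|\ge 1$ and $|I_j^+|=0$ force a coordinate to $0$; rows with $|I_j^+|\ge 3$ and $|I_j^-|=0$ force $|\mathrm{supp}(v)|$ on three coordinates to be at most $1$; both cases would cap $|S|\le 2^{k-1}$ and are therefore excluded. If only the remaining ``light'' rows $e_a-e_b$ (poset constraint $v_a\ge v_b$) and $e_a+e_b$ (pair-exclusion, reducible to a poset constraint by reflecting a single coordinate, provided the reflections are globally consistent) survive, then $S$ is the family of up-sets of a poset $P$ on $[k]$ after appropriate reflections, so $|S|=N(P)$. In this subcase I would prove $N(P)\in\{2^{k-1}+2^\ell:0\le\ell<k\}$ by induction on $k$ via the multiplicativity $N(P)=\prod N(P_i)$ over connected components, together with the sub-lemma: for a connected poset $Q$ on $n\ge 2$ vertices, $N(Q)\le 2^{n-1}+1$, with equality only for (co-)stars. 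The easy half of the sub-lemma uses $N(Q)=1+N(Q-a)$ when $Q$ has a unique minimum $a$; the hard case---$Q$ connected with neither unique minimum nor unique maximum---needs the strict bound $N(Q)\le 2^{n-1}$, for which the natural splitting $N(Q)=N(Q-b)+N(Q-Q_{\le b})$ at a maximum $b$ is not tight enough, and a simultaneous peeling of a min-max pair or a compression argument would likely be required.

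\textbf{Main obstacle.} The genuine difficulty is that other row types survive, notably $e_a+e_b-e_c$ (cutting away only $2/8$ of $\HH^k$) and heavier configurations such as $e_a+e_b+e_c-e_d$ and $e_a+e_b-e_c-e_d$, all of which are compatible with $|S|>2^{k-1}$. Indeed, the explicit constructions in Proposition~\ref{prop:powers-of-2} use rows of the first type, and the resulting $S$ is not an up-set family even after coordinate reflections: already $v_1+v_2-v_3\in\{0,1\}$ on $k=3$ gives $|S|=6=2^{k-1}+2^{k-2}$ with $S$ not closed under union in $2^{[3]}$. A complete proof therefore requires a far-reaching extension of the case analysis in Theorem~\ref{thm:Hnk-gap}, likely combining (i) a refined pigeonhole exploiting $|S|$ above thresholds strictly greater than $2^{k-1}$ to constrain the heavier rows further, (ii) an inductive partition of the coordinates according to which heavy rows they appear in, and (iii) an explicit enumeration of how heavy rows can co-occur with poset-type rows. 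This substantial combinatorial extension, which the authors themselves flag as needed, is where the conjecture's principal difficulty lies.
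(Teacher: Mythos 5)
This item is a \emph{conjecture}: the paper offers no proof of it, so there is nothing to compare your argument against, and indeed you do not claim to have closed it --- your ``main obstacle'' paragraph openly leaves the decisive case unresolved. The partial reductions you do carry out are sound: the pigeonhole argument (as in case 3 of Theorem~\ref{thm:Hnk-gap}, applied to every coordinate once $|S|>2^{k-1}$) does force every entry of $L$ into $\{-1,0,1\}$, and your elimination of rows with $|I_j^-|\geq 1,\ |I_j^+|=0$ and of rows with $|I_j^+|\geq 3,\ |I_j^-|=0$ is correct. But the proposal is not a proof, and the honest gap you flag is exactly where the difficulty lives.

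More importantly, the gap is not merely hard to close --- it cannot be closed, because the conjecture as stated is false. The paper's own ``Note added'' records that Groenland and Johnston (arXiv:1810.02729) proved
\[
  H(\infty,k) \cap \{2^{k-1}+1,\ldots,2^k\} = \{2^{k-1}+2^{i}:i=0,\ldots,k-1\} \cup \left\{\tfrac{35}{64}2^k\right\},
\]
so already for $k=6$ the number $35=\tfrac{35}{64}\cdot 64$ lies in $H(\infty,6)$ yet is not of the form $2^{5}+2^{i}$. The extra element arises precisely from the ``heavy'' mixed-sign rows (types like $e_a+e_b-e_c$) that you correctly identify as surviving your case analysis; they are not an artifact of an insufficiently refined pigeonhole but a genuine source of new intersection cardinalities above $2^{k-1}$. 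So any strategy aiming to show that only poset-type (up-set) families can exceed $2^{k-1}$ --- including the compression/peeling sub-lemma you sketch --- is doomed for the stated equality; at best it could prove the corrected Groenland--Johnston version, which requires accounting for exactly one exceptional heavy configuration. Your lower-bound half via Proposition~\ref{prop:powers-of-2} is fine and matches the paper's remark preceding the conjecture.
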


We close this section with some observations on the small numbers
in $H(\infty,k)$. In the examples for $k=1,2,3,4,5$ that we looked at
above, we observe that all integers up to $2^{k-1}+2$ occur as 
intersections. We believe this to be a general pattern and propose
the following conjecture.

\begin{conjecture}
\label{conj:small}
For all $k \geq 1$, $H(\infty,k)$ contains the whole integer interval 
$\bigl[2^{k-1}+2\bigr] = \bigl\{1,2,\ldots,2^{k-1}+1,2^{k-1}+2\bigr\}$.

To prove this by induction, we would need a construction
that takes us from $t\in H(\infty,k)$ to $2t-1\in H(\infty,k+1)$,
because we have already $2t\in H(\infty,k+1)$ by the direct sum construction.
Note that we do not actually have a universal construction to show
$2^{k-1}-1\in H(\infty,k)$; numerical tests however seem to support this.
\end{conjecture}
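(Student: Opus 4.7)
The plan is to prove Conjecture \ref{conj:small} by induction on $k$, with base cases $k \leq 5$ following from the explicit examples given in Section \ref{sec:constructions}. For the inductive step, assume $H(\infty,k) \supseteq \{1,2,\ldots,2^{k-1}+2\}$; the goal is to show $H(\infty,k+1) \supseteq \{1,2,\ldots,2^k+2\}$. The even elements $2,4,\ldots,2^k+2$ of the target are all of the form $2t$ with $t \in \{1,\ldots,2^{k-1}+1\}$, and thus lie in $H(\infty,k+1)$ by applying Proposition \ref{prop:directsum} to $t \in H(\infty,k)$ and $2 \in H(\infty,1)$. The value $1$ is immediate. What remains, as the author observes, is to realise every odd value $2t-1$ for $t \in \{2,\ldots,2^{k-1}+1\}$.

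For this I propose a ``double and prune'' construction. Given $L:\RR^k \to \RR^m$ with solution set $S_L = \{0,v_1,\ldots,v_{t-1}\} \subset \HH^k$ of size $t$, define $L':\RR^{k+1} \to \RR^{m+1}$ by setting $L' e_i = (L e_i, c_i)$ for $i = 1,\ldots,k$ and $L' e_{k+1} = (0,\ldots,0,1)$, where $c = (c_1,\ldots,c_k) \in \RR^k$ is to be chosen. A case split on $v_{k+1} \in \{0,1\}$ shows that $|L'^{-1}\HH^{m+1} \cap \HH^{k+1}|$ equals $2t-1$ precisely when the functional $v \mapsto \langle c, v \rangle$ vanishes on $S_L$ except at a single $v^* \in S_L \setminus \{0\}$, where it takes the value $1$. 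Such a $c$ exists if and only if $v^* \notin \operatorname{span}(S_L \setminus \{v^*\})$, that is, $v^*$ is linearly essential in $S_L$.

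The main obstacle is that not every realisation of $t$ admits a linearly essential vector: for instance, the $t=2k-1$ construction in Proposition \ref{prop:k+1:2k-1} has $S_L$ spanning $\RR^k$ with every non-zero solution redundant. However, a different realisation of the same $t$ may well admit one: for example, the realisation of $t=5$ obtained by applying Proposition \ref{prop:t+1} to the identity on $\RR^2$ has $S_L = \{0,\,e_1,\,e_2,\,e_1+e_2,\,e_3\}$ with $e_3$ essential, yielding $9 = 2\cdot 5 - 1 \in H(\infty,4)$. The key step---and likely the main technical hurdle---would be to establish that for every relevant $t$ some realisation admitting an essential $v^*$ exists, perhaps by showing that iterated applications of Propositions \ref{prop:t+1} and \ref{prop:powers-of-2} always produce realisations with this property. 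If that fails, one would need to enrich the construction by allowing $L' e_{k+1}$ to have non-zero first-$m$ coordinates (thereby shifting the ``second copy'' of $\HH^k$), which offers more flexibility but complicates the combinatorial analysis considerably; I expect the bulk of the work to lie in this direction.
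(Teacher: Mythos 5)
The statement you are addressing is a \emph{conjecture}: the paper offers no proof of it, only the heuristic remark that an induction would go through if one could pass from $t\in H(\infty,k)$ to $2t-1\in H(\infty,k+1)$. Your proposal is essentially an elaboration of that same remark --- the ``double and prune'' map $L'e_i=(Le_i,c_i)$, $L'e_{k+1}=(0,\ldots,0,1)$ is a concrete candidate for such a $2t\mapsto 2t-1$ construction, and your analysis of when it yields exactly $2t-1$ points (namely, when some $v^*\in S_L\setminus\{0\}$ lies outside the span of the remaining solutions) is correct as far as it goes. But you have not closed the argument: the ``key step'' of showing that every relevant $t$ admits a realisation with a linearly essential solution vector is exactly the open part of the problem, and you explicitly leave it unresolved. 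So what you have is a reformulation of the conjecture's difficulty, not a proof.

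More decisively, this gap cannot be filled: the conjecture is \emph{false}. As recorded in the paper's closing ``Note added,'' Groenland and Johnston (arXiv[math.CO]:1810.02729) disproved it by exhibiting a constant fraction of integers below $2^{k-1}$ that do not occur in $H(\infty,k)$ at all. In particular, for all sufficiently large $k$ there are odd values $2t-1\leq 2^{k-1}+1$ that are simply not intersection cardinalities, so no choice of realisation of $t$ and no enrichment of the pruning functional (e.g.\ allowing $L'e_{k+1}$ to have non-zero first-$m$ coordinates) can produce them. Your obstruction --- that some realisations have every non-zero solution linearly redundant --- is therefore not a technical inconvenience to be engineered around but a symptom of a genuine combinatorial barrier. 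Any correct treatment of this statement must either present it as an open conjecture (as the paper does) or, in light of the subsequent literature, as a refuted one.
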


\section{Bounds on the largest numbers in $\widehat{H}(n,k)$ and $\widetilde{H}(n,k)$} 
\label{sec:tilde-Hnk}
The isometric case, and also the more general contractive case,
are much more constrained, as has been observed before.
Evidently, both $\widehat{H}(2k,k)$ and $\widetilde{H}(k+1,k)$
contain the powers of $2$, $\{1,2,4,\ldots,2^{k-1},2^k\}$. At
least at the upper end that's all there is to it:

\begin{theorem}[{Orthogonal case in Nakata \emph{et al.}~\cite{t-design}}]
  \label{thm:tilde-Hnk-gap}
  For any $k$, the second largest number in $\widetilde{H}(\infty,k)$,
  and in particular the second largest number in $\widehat{H}(\infty,k)$,
  equals $\tilde{h}_2(k) = 2^{k-1}$.
  It is contained in $\widehat{H}(n,k)$ for all $n\geq 2k$ and in
  $\widetilde{H}(n,k)$ for all $n > k$.
\end{theorem}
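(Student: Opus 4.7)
\emph{The plan.} The lower bound is the easier direction: $2^{k-1}$ is a power of $2$, so it lies in $\widehat{H}(2k,k)$ via the explicit isometry constructed at the start of Section~\ref{sec:constructions} (taking $j=k-1$), and in $\widetilde{H}(k+1,k)$ via the contraction construction there; Proposition~\ref{prop:increase-n-k} then extends these witnesses to all $n \geq 2k$ and $n > k$, respectively. The real content is the upper bound: assuming $L:\RR^k \longrightarrow \RR^m$ is a contraction and $S := \HH^k \cap L^{-1}\HH^m$ satisfies $|S| > 2^{k-1}$, one must show $|S| = 2^k$.

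\emph{Pinning down $Le_i$.} For each $i \in [k]$, the $2^{k-1}$ pairs $\{v,\,v+e_i\}$ (with $v_i = 0$) partition $\HH^k$, so $|S| > 2^{k-1}$ forces some pair to lie entirely in $S$. Then $Le_i = L(v+e_i) - Lv$ is a difference of two vectors in $\HH^m$, hence lies in $\EE^m := \{-1,0,1\}^m$. Combined with the contraction bound $\|Le_i\| \leq \|e_i\| = 1$, this leaves $Le_i$ with at most one nonzero coordinate, so $Le_i \in \{0\} \cup \{\pm e_j : j \in [m]\}$.

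\emph{Injectivity and sign.} If $Le_{i_1}$ and $Le_{i_2}$ (with $i_1 \neq i_2$) were both nonzero and targeted the same index $\pm e_j$, then one of $L(e_{i_1} \pm e_{i_2})$ would equal $\pm 2 e_j$, whose squared norm $4$ exceeds $\|e_{i_1} \pm e_{i_2}\|^2 = 2$, contradicting the contraction bound. Hence the assignment $i \mapsto j(i)$ is injective on $B := \{i : Le_i \neq 0\}$. If some $i_0 \in B$ had $Le_{i_0} = -e_{j(i_0)}$, injectivity would ensure that no other basis vector contributes to coordinate $j(i_0)$, so $(Lv)_{j(i_0)} = -v_{i_0}$, forcing $v_{i_0} = 0$ for every $v \in S$; this would give $|S| \leq 2^{k-1}$, a contradiction. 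Therefore every nonzero $Le_i$ equals $+e_{j(i)}$ with distinct targets, whence $Lv \in \HH^m$ for \emph{every} $v \in \HH^k$, giving $|S| = 2^k$.

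\emph{The delicate point.} The genuinely load-bearing step is the injectivity of $i \mapsto j(i)$: without it, the conclusion $Le_i \in \{0, \pm e_j\}$ would still admit ``$3/4$''-type cancellations between basis images, and one would only recover a threshold somewhere between $2^{k-1}$ and $2^k - 2^{k-2}$. The short two-dimensional contraction computation is what sharpens the gap from the unconstrained case's $2^k - 2^{k-2}$ down to $2^{k-1}$ here.
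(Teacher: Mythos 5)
Your proof is correct and follows essentially the same route as the paper's: the pigeonhole argument over the $2^{k-1}$ pairs $\{v, v+e_i\}$ to place $Le_i$ in $\{-1,0,1\}^m$, the contraction bound to reduce it to $0$ or $\pm e_j$, the two-dimensional norm computation to force distinct target coordinates, and the observation that any negative sign caps the intersection at $2^{k-1}$. The only cosmetic difference is that the paper packages the final step as ``the intersection equals $2^{k-\nu}$ where $\nu$ counts the negative signs,'' whereas you derive the contradiction for a single negative sign directly; the content is identical.
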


\begin{proof}
It is clear that $2^{k-1}$ is in $\widehat{H}(2k,k)$, and in $\widetilde{H}(k+1,k)$.
For the opposite, nontrivial, claim, we have to show that for
a contraction $L:\RR^k \longrightarrow \RR^m$ with
$\bigl| L^{-1} \HH^m \cap \HH^k \bigr| > 2^{k-1}$, the intersection
must already be of the maximum size $2^k$.

By the pigeon hole principle, for every $i\in[k]$ there exists a 
$v_i\in\HH^{[k]\setminus i}$ such that $v_i,\, v_i+e_i \in \HH^k$ and 
$L v_i,\, L(v_i+e_i) \in \HH^m$. Thus, we get, by linearity of
the map, $L e_i = L(v_i+e_i) - L v_i \in \{0,1,-1\}^m$. Since 
$L$ is a contraction, it follows that either $L e_i = 0$ or
$L e_i = \mu_i e_{\lambda(i)}$ with $\mu_i=\pm 1$. An important
observation is that for $i\neq j$, necessarily $\lambda(i)\neq\lambda(j)$.
[For if it were otherwise, $\lambda(i)=\lambda(j)=\lambda$, 
consider $v=\mu_i e_i+\mu_j e_j$, which is mapped to $L v = 2e_\lambda$;
however, $v$ has length $\sqrt{2}$, whereas its image $L v$ has
length $2$, contradicting the assumption that $L$ is a contraction.]
We can include the case $L e_i=0$ by allowing $\mu_i=0$, and
introducing $\lambda(i)$ differing, w.l.o.g., from all other $\lambda(j)$.

Now we can express the image of a generic $x\in\HH^k$ as
\[
  L x = \sum_{i=1}^k x_i L e_i
      = \sum_{i=1}^k x_i \mu_i e_{\lambda(i)},
\]
which is in $\HH^m$ if and only if $x_i \mu_i \neq -1$ for all $i$,
meaning that $\bigl| L^{-1} \HH^m \cap \HH^k \bigr|$ is a power of $2$,
namely $2^{k-\nu}$ with $\nu = |\{ i : \mu_i=-1 \}|$. By our assumption,
$\nu = 0$, and thus the entire hypercube $\HH^k$ is mapped to $\HH^m$ by $L$.
\end{proof}

\medskip
%
%
What about the subsequent gaps, and the general structure of the 
sets $\widetilde{H}(\infty,k)$ and $\widehat{H}(\infty,k)$?
It turns out that it's not all powers of $2$.
To wit, it is of course the case that 
$\widehat{H}(\infty,1) = \widetilde{H}(\infty,1) = \{1,2\}$
and $\widehat{H}(\infty,2) = \widetilde{H}(\infty,2) = \{1,2,4\}$,
in accordance with Theorem~\ref{thm:tilde-Hnk-gap}. However, for larger
$k$, we have intersection numbers that are not powers of $2$.

\begin{example} 
For $k=3$, it holds
$\widetilde{H}(\infty,3) = \widehat{H}(\infty,3) = \widehat{H}(6,3) = \{1,2,3,4,8\}$.
Indeed, the possibility of all powers of two is evident (they are
evidently realised by isometries), 
and the gap between $4$ and $8$ is by Theorem~\ref{thm:tilde-Hnk-gap}, 
leaving only the cardinality $3$ to be realised. This is accomplished by the 
orthogonal map $L:\RR^3\longrightarrow\RR^3$ defined via
\begin{align*}
  L e_1 &= \frac13 \left(  e_1 + 2e_2 + 2e_3 \right), \\
  L e_2 &= \frac13 \left( 2e_1 +  e_2 - 2e_3 \right), \\
  L e_3 &= \frac13 \left( 2e_1 - 2e_2 +  e_3 \right),
\end{align*}
which has the property that $L 0 = 0$, $L(e_1+e_2)=e_1+e_2$,
$L(e_1+e_3)=e_1+e_3$, but no other hypercube point is mapped to
the hypercube as can be seen by inspection.

As a consequence, the third largest number $\hat{h}_3(k)$ in 
$\widehat{H}(\infty,k) \subset \widetilde{H}(\infty,k)$, is lower
bounded $\hat{h}_3(k) \geq 3\cdot 2^{k-3}$, and in fact it is found in
$\widehat{H}(2k,k)$, and also in $\widetilde{H}(k+3,k)$.
\end{example}

\medskip
In particular, the construction implies for $k=4$, that
$\widetilde{H}(\infty,4) \supset \widehat{H}(\infty,4) 
                         \supset \widehat{H}(8,4) \supset \{1,2,3,4,6,8,16\}$.
Theorem~\ref{thm:tilde-Hnk-gap} rules out the range from $9$ to $15$, leaving
however the questions after $5$ and $7$. The former is provided by the 
following construction, albeit in dimension larger than $8$.

\begin{proposition}
  \label{prop:k+1-isometry}
  For every $k\geq 3$, $n\geq 3k-2$, we have 
  $k+1 \in \widehat{H}(n,k) \subset \widehat{H}(\infty,k) \subset \widetilde{H}(\infty,k)$.
  In particular, $5\in \widehat{H}(10,4)$.
\end{proposition}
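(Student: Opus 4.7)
The plan is to exhibit, for each $k \geq 3$, an explicit isometry $L:\RR^k\to\RR^{2k-2}$ whose hypercube-intersection is exactly the $(k+1)$-element set
\[
  T := \{0\} \cup \{e_1 + e_i : 2 \leq i \leq k\} \cup \{e_2 + e_3 + \ldots + e_k\}.
\]
The claim $k+1 \in \widehat{H}(n,k)$ for $n \geq 3k-2$ then follows from the monotonicity given by Proposition~\ref{prop:increase-n-k}, and the particular case $5 \in \widehat{H}(10,4)$ is just $k=4$.

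To enforce $L(e_1 + e_i) = e_1 + e_i$ (viewing each $e_i$ as a standard basis vector in $\RR^{2k-2}$), I set $Le_1 = u$ and $L e_i = (e_1+e_i) - u$ for $i = 2, \ldots, k$. The isometry conditions are equivalent to $\|u\| = 1$ together with $\langle u, e_1 + e_i\rangle = 1$ for every $i \geq 2$; solving these fixes $u_1 = (k-2)/(k-1)$ and $u_2 = \ldots = u_k = 1/(k-1)$, and leaves the remaining $k - 2$ coordinates free subject to $\sum_{j > k} u_j^2 = (k-2)/(k-1)^2$. The choice $u_j = -1/(k-1)$ for all $j > k$ satisfies this, and is made specifically so that $L(e_2 + \ldots + e_k) = e_1 + e_{k+1} + \ldots + e_{2k-2}$ also lands in $\HH^{2k-2}$, supplying the final element of $T$.

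The heart of the argument is to verify that no $v \in \HH^k \setminus T$ satisfies $Lv \in \HH^{2k-2}$. A direct calculation shows that the last $k-2$ coordinates of $Lv$ are all equal to $(s - v_1)/(k-1)$, where $s := \sum_{i=2}^k v_i$. Requiring just these to lie in $\{0,1\}$ forces $s - v_1 \in \{0, k-1\}$, leaving exactly three options: $v = 0$; $v = e_1 + e_i$ for a single $i \in \{2,\ldots,k\}$; or $v = e_2 + \ldots + e_k$. In each of these cases the remaining coordinates are easily checked to be in $\{0,1\}$, so $\HH^k \cap L^{-1}\HH^{2k-2} = T$ and the intersection has size $k+1$. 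The main obstacle I expect is finding the right target $T$: simpler or more symmetric candidates (a chain of consecutive Hamming neighbours, or the ``star'' $\{0, e_1, e_1+e_2, \ldots, e_1+e_k\}$) force $L$ to act as a permutation of standard basis vectors and collapse the intersection size up to $2^k$. It is the uniform behaviour of the extra $k-2$ coordinates of $u$ that yields the clean case analysis above.
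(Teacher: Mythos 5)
Your proposal is correct and takes essentially the same approach as the paper: both construct an explicit isometry $L:\RR^k\to\RR^{2k-2}$ in which the last $k-2$ coordinates of every image $Lv$ are equal to a common multiple of $\frac{1}{k-1}$, which filters the hypercube down to exactly $k+1$ surviving points (the paper's surviving set is the origin together with all weight-$(k-1)$ vectors; yours is a reflected variant of that configuration). One small imprecision: the isometry conditions alone do not \emph{fix} $u_1=\frac{k-2}{k-1}$ (they leave $u_1$ free once the tail coordinates absorb the norm constraint) --- that value is \emph{chosen} so that $L(e_2+\cdots+e_k)$ lands in the cube, as your next sentence in effect acknowledges, and the verification is unaffected.
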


\begin{proof}
Denoting the all-$1$ vector in $\RR^k$ by $\vec{1}^k$, we introduce
$v_i = \vec{1}^k-e_i$ for $i=1,\ldots,k$. Note that these vectors all have
weight $k-1$ and pairwise inner product $k-2$. This allows us to define
an isometry $L:\RR^k \longrightarrow \RR^{2k-2}$ by letting
\[
  L v_i := e_i \oplus \vec{1}^{k-2} \quad (i=1,\ldots,k).
\]
By adding all of these equations we obtain
$(k-1)L \vec{1}^k = \vec{1}^k \oplus k\,\vec{1}^{k-2}$, and hence
\[\begin{split}
  L e_i &= L\left( \vec{1}^k-v_i \right) \\
        &= \frac{1}{k-1}\left( \vec{1}^k \oplus k\,\vec{1}^{k-2} \right) 
                                 - \left( e_i \oplus \vec{1}^{k-2} \right) \\
        &= \left( \frac{1}{k-1}\vec{1}^k - e_i \right) \oplus \frac{1}{k-1}\vec{1}^{k-2}.
\end{split}\]
This means, looking at the second terms in the direct sum,
that only those hypercube points (which are sums of $e_i$'s) can be
mapped to the hypercube that have weight $0$ or $k-1$, leaving precisely
the origin plus the $k$ vectors $v_i$.
\end{proof}

\section{Discussion}
\label{sec:discussion}
We have initiated the study of the possible intersections of a hypercube
with a linear subspace, or equivalently of the number of points of the
$k$-hypercube mapped to an $m$-hypercube by a linear map. 
While the largest such number is clearly $2^k$, we showed that the 
second-largest number is $\frac34$ of that in the general case
(and $\frac12$ in the case of isometries and contractions). It seems
that also the third-largest intersection, and perhaps more of the
``large'' intersection cardinalities are bounded away from $2^k$ by
a constant fraction gap.

On the other end, regarding ``small'' intersection cardinality,
we have empirically observed that for $k\leq 5$, all integers from
$1$ to $2^{k-1}+2$ occur, and conjecture that this is the case in
general. To prove this conjecture remains one of the biggest
open problems of our study.

Some other concrete questions that we would like to recommend to the
attention of the reader include the following:

\begin{enumerate}
\item Given \(t>0\), what is the smallest \(k\) such that \(t\in H(\infty,k)\)?
\item Given \(t\in H(\infty,k)\), what is the smallest \(n\) such that \(t\in H(n,k)\)?
\end{enumerate}

\vspace{.5cm}
\subsection*{Acknowledgments}
This work was initiated during NM's sabbatical with the
Grup d'Infor-maci\'o Qu\`antica (GIQ) at UAB, in the autumn
semester of the 2016/17 term.
The authors have nobody to thank but each other for the nice
mess they've gotten themselves into.

This research was supported by the EC (STREP FP7-ICT-2013-C-323970 ``RAQUEL''), 
the ERC (AdG ERC-2010-AdG-267386 ``IRQUAT''), 
the Spanish MINECO (grant nos. FIS2013-40627-P and FIS2016-86681-P) 
with the support of FEDER funds, as well as by the Generalitat de 
Catalunya CIRIT, project 2014-SGR-966.

\bigskip
\emph{Note added.} After posting of our manuscript on arXiv, Carla Groenland
and Tom Johnston (arXiv[math.CO]:1810.02729) managed to make progress with our
conjectures. In fact, they proved a modified version of our 
Conjecture \ref{conj:large} on the ``large'' elements $t>2^{k-1}$ of $H(\infty,k)$,
namely
\[
  H(\infty,k) \cap \{2^{k-1}+1,\ldots,2^k\} = \{2^{k-1}+2^{i}:i=0,\ldots,k-1\} \cup \left\{\frac{35}{64}2^k\right\}.
\]
They also disproved our Conjecture \ref{conj:small} on the ``small'' 
elements $t\leq 2^{k-1}$ of $H(\infty,k)$, amounting to the claim that $H(\infty,k)$
would contain all integers up to $2^{k-1}$, by exhibiting a constant
fraction of missing numbers. It seems that $H(\infty,k)$ has a much more intricate 
structure than anticipated.

\vspace{.5cm}
\centering{$\bullet$}
\vspace{.5cm}

\bibliographystyle{unsrt}

\end{document}